\newtheorem{thm}{Theorem}[section]
\newtheorem{asmp}{Assumption}[section]
\newtheorem{exmp}{Example}[section]
\newtheorem{rem}{Remark}
\title{\LARGE \bf Recursive Averaging with Application to Bio-Inspired 3D Source Seeking}
\author{Mahmoud Abdelgalil and Haithem Taha
\thanks{This work was supported by NSF Grant}
\thanks{M. Abdelgalil is with the Department of Mechanical and Aerospace Engineering,
        University of California Irvine, Irvine, CA 92617, USA
        {\tt\small maabdelg@uci.edu}}%
\thanks{H. Taha is with the Faculty of Mechanical and Aerospace Engineering, University of California Irvine,
        Irvine, CA 92617, USA
        {\tt\small hetaha@uci.edu}}%
}
\begin{document}
\maketitle
\thispagestyle{empty}
\pagestyle{empty}

\begin{abstract}
We analyze a class of high-amplitude, high-frequency oscillatory systems in which periodicity occurs on two distinct time scales and establish the convergence of its trajectories to a suitably averaged system by recursively applying the averaging theorem. Moreover, we introduce a novel bio-inspired 3D source seeking algorithm for rigid bodies with a collocated sensor and prove its practical stability under typical assumptions on the source signal strength field by combining our averaging results with singular perturbation. 
\end{abstract}
\section{Introduction}
The method of averaging has a rich history going back as far as the early works of Laplace and Lagrange on celestial mechanics during the 1700s. Over the course of three centuries, the method continued to evolve thanks to the contributions of many great mathematicians \cite[Appendix A]{sanders2007averaging}. In particular, the subject of averaging received strong interest from Soviet Union mathematicians, which led to the formulation of the Krylov-Bogoliubov-Mitropolskii (KBM) averaging method with application to nonlinear oscillations in physics and engineering \cite{bogoliubov1961}. The emphasis was on constructing successive higher order asymptotically accurate solutions to nonlinear time varying differential equations in the presence of weak oscillations. This task was accomplished by employing the so called \textit{Lie transforms}, also known as \textit{near identity transforms} \cite[Section 3.2]{sanders2007averaging}. Naturally, such a pursuit intertwined the method of averaging with other asymptotic approximation techniques such as the method of multiple-scales, see Nayfeh's book \cite[Chapter 6]{nayfeh2008perturbation}.

Not long after, the \textit{chronological calculus} was developed to provide a representation for the flow of time varying vector fields as an exponential-like series \cite{agrachev1978exponential}. The framework of chronological calculus naturally lends itself to averaging analysis. For example, Bullo utilized the chronological calculus framework in the averaging analysis and vibrational stabilization of mechanical systems \cite{bullo2001series,bullo2002averaging}. Concurrently, the framework was employed by Sarychev \cite{sarychev2001lie} and later Vela \cite{vela2003ageneral} as a geometric formulation of the standard averaging theorem, and, in combination with nonlinear Floquet theory, as a tool for the stability analysis of nonlinear time periodic systems For more details on the connection between the chronological calculus approach to averaging and the KBM method, we refer the reader to the recent article \cite{maggia2020higher}. 

Independently, Sussmann and Liu  \cite{sussmann1991limits,liu1997averaging,liu1997approximation} investigated the use of high-frequency, high-amplitude, periodic signals for motion planning of control-affine systems. Their techniques are based on the earlier work of Kurzweil and Jarnik \cite{kurzweil1987limit} on the limits of solutions of sequences of systems of ordinary differential equations. Notably, Sussmann and Liu's techniques on trajectory approximation and tracking may be combined with the notion of \textit{practical stability} \cite{teel1999semi, moreau2000practical} to analyze the long time behavior of time varying nonlinear systems. This approach is based on establishing the so-called `convergence-of-trajectories' property between the system in consideration and the `extended system' in which the high-frequency, high amplitude-oscillation is replaced by accounting for its average effect on the trajectories. Then, the stability properties of the averaged system are transferred to the practical stability for the original system. In a recent effort, the approach was utilized in analyzing extremum seeking systems with the introduction of the Lie Bracket Approximation framework \cite{durr2013lie,scheinker2014non,grushkovskaya2018class,abdelgalil2021lie}.

In this manuscript, we introduce a new class of high-amplitude, high-frequency oscillatory systems with two distinct fast periodic time scales. This class of systems does not fit within the framework of Sussmann and Liu because of its multiple time scale nature. We establish the convergence-of-trajectories property of this new class of systems to the trajectories of a suitably defined averaged system by recursively applying the higher order periodic averaging theorem for systems with slow time dependence \cite[Section 3.3]{sanders2007averaging}, which allows inferring practical stability results of the original system if the recursively averaged system has a globally uniformly asymptotically stable compact subset. As an application, we propose a novel bio-inspired 3D source seeking algorithm for rigid bodies with a collocated sensor and establish its singular practical asymptotic stability \cite{durr2015singularly} under typical assumptions on the signal strength field using a singularly perturbed version of the theorems we state here.

\section{Second Order Recursive Averaging}
Consider the class of systems on the form:
\begin{align}\label{eq:gen_orig_sys}
    \dot{\textbf{x}} &= \sqrt{\omega}~ \textbf{f}_1(\textbf{x},t,\sqrt{\omega}t, \omega t)+\textbf{f}_2(\textbf{x},t,\sqrt{\omega}t, \omega t)
\end{align}
with an initial condition $\textbf{x}(t_0) = \textbf{x}_0$, where $\textbf{x}$ and $\textbf{x}_0\in\mathbb{R}^n,~t_0\in\mathbb{R},~\omega>0$, and suppose that the following assumption is satisfied: \\
\begin{asmp}\thlabel{asmp:A1}
The time varying vector fields $\textbf{f}_i,~i\in\{1,2\}$ are such that:
\begin{itemize}
    \item[A1] $\textbf{f}_i(\cdot,\cdot,\cdot,\tau_2)\in C^{3-i}(\mathbb{R}^{n+2};\mathbb{R}^n)$, $\forall \tau_2\in\mathbb{R}$
    \item[A2] $\textbf{f}_i(\cdot,\cdot,\cdot,\cdot)\in C^0(\mathbb{R}^{n+3};\mathbb{R}^n)$
    \item[A3] $\textbf{f}_i$ is uniformly bounded in its second argument
    \item[A4] $\exists T_1>0$ s.t. $\textbf{f}_i(\cdot,\cdot,\tau_1+T_1,\cdot)=\textbf{f}_i(\cdot,\cdot,\tau_1,\cdot)$ $\forall \tau_1\in\mathbb{R}$
    \item[A5] $\exists T_2>0$ s.t. $\textbf{f}_i(\cdot,\cdot,\cdot,\tau_2+T_2)=\textbf{f}_i(\cdot,\cdot,\cdot,\tau_2)$ $\forall \tau_2\in\mathbb{R}$
    \item[A6] $\int_0^{T_1}\textbf{f}_1(\cdot,\cdot,\cdot,\tau_2)d\tau_2 =0$\\
\end{itemize}
\end{asmp}
Note that these assumptions may not be minimal but they are typical in the standard averaging literature \cite{ellison1990improved}. In addition, it is clear that this class of systems does not fit within the framework of Sussmann and Liu because of the presence of the intermediate time scale $\sqrt{\omega} t$. 
\begin{rem}
It is not clear from the literature whether Sussmann and Liu's results fit in the same picture or is fundamentally different from the two general approaches to averaging, i.e. the KBM method and the chronological calculus. In fact, some researchers believe that the class of systems that are considered in Sussmann and Liu's work are not directly amenable for averaging analysis by the KBM method (e.g. \cite[Section 5]{durr2013lie}). However, we believe that there is no essential difference between the three approaches.
\end{rem}
\begin{rem}
We note that, aside from some technical regularity conditions on the time dependence, the new class of systems includes as a special case the class of systems considered in the Lie Bracket Approximation framework and the second order case in Sussmann and Liu's framework when the dependence on the time scale $\sqrt{\omega}t$ is trivial.
\end{rem}

Now, consider the averaged system:
\begin{align}\label{eq:gen_avg_sys}
    \dot{\overline{\overline{\textbf{x}}}} &= \overline{\textbf{f}}_1(\overline{\overline{\textbf{x}}},t) + \overline{\textbf{f}}_2(\overline{\overline{\textbf{x}}},t), & \overline{\overline{\textbf{x}}}(t_0) &=\textbf{x}_0
\end{align}
where the time varying vector fields $\overline{\textbf{f}}_1$ and $\overline{\textbf{f}}_2$ are given by:
\begin{alignat}{2}
    \nonumber \overline{\textbf{f}}_1(\textbf{x},t)&= \frac{1}{2T_1 T_2}\int_0^{T_1}\int_0^{T_2}\Big[ \int_0^{\tau_2}&&\textbf{f}_1(\textbf{x},t,\tau_1, s_2) ds_2,  \\
    & & &\textbf{f}_1(\textbf{x},t,\tau_1, \tau_2)\Big]d\tau_2 ~d\tau_1 \\
    \overline{\textbf{f}}_2(\textbf{x},t) &= \frac{1}{T_1 T_2}\int_0^{T_1}\int_0^{T_2} \textbf{f}_2(\textbf{x},t,&&\tau_1, \tau_2)d\tau_2 ~d\tau_1
\end{alignat}
Then, we have the following theorem concerning the relation between the trajectories of the systems (\ref{eq:gen_orig_sys}) and (\ref{eq:gen_avg_sys}):
\begin{thm}\thlabel{thm:T1}
Let a nonempty compact subset $\mathcal{K}\subset\mathbb{R}^n$ and a final time $t_f>0$ be such that a unique trajectory $\overline{\overline{\textbf{x}}}:[t_0,t_0+t_f]\ni t\mapsto\overline{\overline{\textbf{x}}}(t)\in \mathbb{R}^n$ of the system (\ref{eq:gen_avg_sys}) exists $\forall \textbf{x}_0\in \mathcal{K},\,\forall t_0\in \mathbb{R}$. Then $\exists C,\omega_0\in(0,\infty)$ such that $\forall \omega\in(\omega_0,\infty),\, \forall \textbf{x}_0\in \mathcal{K},\, \forall t_0\in\mathbb{R}$, a unique trajectory of the system (\ref{eq:gen_orig_sys}) exists and satisfies:
\begin{align}
 \lVert\textbf{x}(t)-\overline{\overline{\textbf{x}}}(t)\lVert &\leq C/\sqrt{\omega}, & \forall t&\in[t_0,t_0+t_f]\\\nonumber
\end{align}
\end{thm}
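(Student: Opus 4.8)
The plan is to prove the estimate by applying the periodic averaging theorem twice, once for each fast time scale, exactly as the title and introduction suggest. First I would pass to the fastest time scale $s=\omega t$ and set $\varepsilon = 1/\sqrt{\omega}$. Since $\tfrac{d}{dt}=\omega\tfrac{d}{ds}$, and because $t=\varepsilon^2 s$, $\sqrt{\omega}\,t=\varepsilon s$, $\omega t = s$, the system (\ref{eq:gen_orig_sys}) becomes
\[
\frac{d\textbf{x}}{ds} = \varepsilon\,\textbf{f}_1(\textbf{x},\varepsilon^2 s,\varepsilon s,s) + \varepsilon^2\,\textbf{f}_2(\textbf{x},\varepsilon^2 s,\varepsilon s,s),
\]
which is precisely the standard form for averaging in the fast variable $s$ with small parameter $\varepsilon$, where the remaining arguments $\varepsilon s=\tau_1$ and $\varepsilon^2 s=t$ play the role of slow time dependence. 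By A4--A5 the right-hand side is $T_2$-periodic in $s$, and by the zero-mean condition A6 the $O(\varepsilon)$ term $\textbf{f}_1$ has vanishing $s$-average, so the first-order average is zero and the leading dynamics appear only at order $\varepsilon^2$.

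To extract that $O(\varepsilon^2)$ contribution I would introduce the near-identity transformation $\textbf{x}=\textbf{z}+\varepsilon\,\textbf{u}_1$, with $\textbf{u}_1(\textbf{z},t,\tau_1,\tau_2)=\int_0^{\tau_2}\textbf{f}_1(\textbf{z},t,\tau_1,s_2)\,ds_2$, which is $T_2$-periodic in $\tau_2$ precisely because of A6. Substituting and cancelling the $O(\varepsilon)$ terms leaves an $O(\varepsilon^2)$ system whose $s$-average is the crux of the computation: averaging $\tfrac{\partial \textbf{f}_1}{\partial \textbf{z}}\textbf{u}_1$ over one period $T_2$ and integrating by parts (the boundary term vanishes by periodicity together with $\textbf{u}_1\big|_{\tau_2=T_2}=0$, again from A6) symmetrizes it into $\tfrac12[\textbf{u}_1,\textbf{f}_1]$, which is where both the Lie-bracket structure and the factor $1/2$ in $\overline{\textbf{f}}_1$ originate. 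Invoking the higher-order periodic averaging theorem for systems with slow time dependence \cite{sanders2007averaging} then produces a partially averaged system whose right-hand side depends only on $\textbf{z},t,\tau_1$ and equals $\tfrac{1}{2T_2}\int_0^{T_2}[\textbf{u}_1,\textbf{f}_1]\,d\tau_2 + \tfrac{1}{T_2}\int_0^{T_2}\textbf{f}_2\,d\tau_2$, with an error $O(\varepsilon)$ uniform on $[t_0,t_0+t_f]$.

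The recursive step is to repeat the argument on the single remaining fast scale $\tau_1=\sqrt{\omega}\,t$. Rescaling time to $r=\sqrt{\omega}\,t$ returns the partially averaged system to standard form with the same $\varepsilon$, but now the surviving term is $O(\varepsilon)$ and $T_1$-periodic in $r$ by A4, so first-order averaging over $\tau_1$ applies directly; averaging the two contributions above over $[0,T_1]$ reproduces exactly $\overline{\textbf{f}}_1+\overline{\textbf{f}}_2$ of (\ref{eq:gen_avg_sys}) at the cost of a further $O(\varepsilon)$ error. Composing the two near-identity transformations, adding the two error estimates, and undoing the time rescalings then yields $\lVert\textbf{x}(t)-\overline{\overline{\textbf{x}}}(t)\rVert = O(\varepsilon)=O(1/\sqrt{\omega})$, which is the claimed bound. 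Uniformity in $t_0\in\mathbb{R}$ and over $\textbf{x}_0\in\mathcal{K}$, together with existence of the original trajectory, I would secure by the usual bootstrapping: enlarge $\mathcal{K}$ to a compact $\mathcal{K}_\delta$ containing a neighborhood of the averaged trajectory, use A3 and the local regularity from A1--A2 to bound the vector fields and their relevant derivatives on $\mathcal{K}_\delta$, and show that for $\omega>\omega_0$ the true trajectory cannot leave $\mathcal{K}_\delta$ before $t_0+t_f$.

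I expect the main obstacle to be the second (recursive) application rather than the first. After averaging out $\tau_2$ one must verify that the residual vector field is sufficiently regular in the intermediate variable $\tau_1$ and in $t$ --- note that $\tau_1$ is fast relative to $t$ yet slow relative to $\tau_2$ --- and, more delicately, that the $O(\varepsilon)$ remainder generated by the first step is itself $C^1$ and uniformly bounded, so that it is not amplified when divided by the factor $\varepsilon$ that the second rescaling reintroduces. This is exactly what the graded regularity in A1 ($\textbf{f}_1\in C^2$, supplying one derivative for the transform and one for the bracket) and A2 ($\textbf{f}_2\in C^1$, supplying the derivative needed for first-order averaging) is designed to provide; confirming that these hypotheses survive the first averaging step, and that the associated estimates remain uniform in $t_0$, is the technical heart of the argument.
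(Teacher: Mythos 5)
Your proposal is correct and follows essentially the same route as the paper: rescale to the fastest time $\omega t$ with $\varepsilon=1/\sqrt{\omega}$, use A6 to invoke second-order periodic averaging with slow time dependence over $\tau_2$ (this is where the $\tfrac12[\,\cdot\,,\cdot\,]$ bracket term arises), rescale to $\sqrt{\omega}t$, apply first-order averaging over $\tau_1$, and compose the two $O(\varepsilon)$ estimates, with compactness/bootstrapping guaranteeing existence and uniformity in $t_0$ and $\textbf{x}_0\in\mathcal{K}$. The only differences are presentational: you unfold the near-identity transform that the paper keeps hidden inside the cited theorems (cf. its remark that the $\mathcal{O}(\varepsilon)$ transform terms vanish), and the paper chains the estimates in the reverse, logically cleaner order (fully averaged $\to$ partially averaged $\to$ original, via the ``trade-off'' version of second-order averaging) so that each theorem's compactness hypothesis is supplied by the preceding step --- precisely the point your final bootstrapping paragraph addresses.
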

\begin{proof} The idea of the proof hinges on a two step averaging procedure for trajectory approximation in which the first step is second order averaging of the system (\ref{eq:gen_orig_sys}) on the time scale $\omega t$, followed by first order averaging of the resulting system on the time scale $\sqrt{\omega} t$; hence the `recursive' nature. This approach may be generalized to higher orders in a similar fashion. 
First, we apply the time scaling $\tau = \omega (t-t_0)$, and let $\varepsilon = \frac{1}{\sqrt{\omega}}$ to obtain the system:
\begin{align}\label{eq:avging_step_0}
    \frac{d\textbf{x}}{d\tau} &=  \varepsilon\, \textbf{f}_1(\textbf{x},\varepsilon^2\tau,\varepsilon \tau, \tau) + \varepsilon^2\textbf{f}_2(\textbf{x},\varepsilon^2\tau,\varepsilon \tau, \tau)
\end{align}
which is on the averaging canonical form. Note that we suppressed the dependency on the initial time $t_0$ for brevity, but it is implied. By applying the stroboscopic averaging procedure for systems with slow time dependence to second order in $\varepsilon$ \cite[Section 3.3]{sanders2007averaging}, we obtain the system:
\begin{equation}
\begin{aligned}    \label{eq:avging_step_1_1}
\frac{d\overline{\textbf{x}}}{d\tau}=  \varepsilon^2 \Bigg(\frac{1}{T_2}\int_0^{T_2}\textbf{f}_2(\overline{\textbf{x}},\varepsilon^2\tau,\varepsilon &\tau, \tau_2)d\tau_2 ~+  \\
    \frac{1}{2T_2}\int_0^{T_2}\bigg[ \int_0^{\tau_2}\textbf{f}_1(&\overline{\textbf{x}},\varepsilon^2\tau,\varepsilon\tau, s_2) ds_2, \\
    &\textbf{f}_1(\overline{\textbf{x}},\varepsilon^2\tau,\varepsilon\tau, \tau_2)\bigg]d\tau_2\Bigg)
\end{aligned}
\end{equation}
A second time scale change to $\sigma = \varepsilon\tau$ leads to the system:
\begin{equation}
\begin{aligned}\label{eq:avging_step_1_2}
    \frac{d\overline{\textbf{x}}}{d\sigma}=  \varepsilon \bigg(\frac{1}{T_2}\int_0^{T_2}\textbf{f}_2(\overline{\textbf{x}},\varepsilon\sigma,\sigma, \tau_2&)d\tau_2 +  \\
    \frac{1}{2T_2}\int_0^{T_2}\Bigg[ \int_0^{\tau_2}\textbf{f}_1(&\overline{\textbf{x}},\varepsilon\sigma,\sigma, s_2) ds_2,\\
    &\textbf{f}_1(\overline{\textbf{x}},\varepsilon\sigma,\sigma, \tau_2)\Bigg]d\tau_2\bigg)
\end{aligned}
\end{equation}
which is again on the averaging canonical form. By applying the stroboscopic periodic averaging procedure for systems with slow time dependence to first order in $\varepsilon$ \cite[Section 3.3]{sanders2007averaging}, we obtain the system:
\begin{align}
    \frac{d\overline{\overline{\textbf{x}}}}{d\sigma} &= \varepsilon\left(\overline{\textbf{f}}_1(\overline{\overline{\textbf{x}}},\varepsilon\sigma) + \overline{\textbf{f}}_2(\overline{\overline{\textbf{x}}},\varepsilon\sigma)\right)
\end{align}
A final time scale change to $t=\varepsilon \sigma$ brings the system to the fully averaged form:
\begin{align}
    \label{eq:avging_step_2}
    \dot{\overline{\overline{\textbf{x}}}} &= \overline{\textbf{f}}_1(\overline{\overline{\textbf{x}}},t) + \overline{\textbf{f}}_2(\overline{\overline{\textbf{x}}},t), & \overline{\overline{\textbf{x}}}(t_0)&= \textbf{x}_0
\end{align}
From the assumptions of the theorem, we know that $\forall \textbf{x}_0\in \mathcal{K},~\forall t_0\in \mathbb{R}$, a unique trajectory $\overline{\overline{\textbf{x}}}(t)$ of the system (\ref{eq:avging_step_2}) exists on the compact time interval $t\in [t_0,t_0+t_f]$. Moreover. we know that the vector fields $\textbf{f}_i$ are uniformly bounded in the second argument, which implies that there exists a compact subset $\mathcal{M}\subset\mathbb{R}^n$ such that $\overline{\overline{\textbf{x}}}(t)\in \mathcal{M},\,\forall t\in[t_0,t_0+t_f],\,\forall \textbf{x}_0\in \mathcal{K}, \,\forall t_0\in \mathbb{R}$. Hence, the first order periodic averaging theorem \cite{sanders2007averaging} ensures the existence of $\varepsilon_1,C_1\in(0,\infty)$ such that $\forall\varepsilon\in(0,\varepsilon_1),\,\forall \textbf{x}_0\in \mathcal{K},\,\forall t_0\in\mathbb{R}$, a unique trajectory $\overline{\textbf{x}}(\sigma)$ of the system (\ref{eq:avging_step_1_2}) exists on the time interval $\sigma\in \left[0, t_f/\varepsilon\right]$ and satisfies:
\begin{align}
    \lVert \overline{\overline{\textbf{x}}}(\varepsilon\sigma)-\overline{\textbf{x}}(\sigma)\lVert &\leq C_1\varepsilon, & \forall \sigma&\in \left[0,t_f/\varepsilon\right]
\end{align}
Equivalently, a unique trajectory $\overline{\textbf{x}}(\varepsilon\tau)$ of the system (\ref{eq:avging_step_1_1}) exists on the compact time interval $\tau\in \left[0,t_f/\varepsilon^2\right]$ and $\overline{\textbf{x}}(\varepsilon\tau)\in \mathcal{M}_{\varepsilon_1}$, where the compact subset $\mathcal{M}_{\varepsilon_1}$ is defined by:
\begin{align}
   \mathcal{M}_{\varepsilon_1}&= \left\{\overline{\textbf{x}}\in\mathbb{R}^n~\left|~\inf_{\textbf{x}\in \mathcal{K}}{\lVert \textbf{x}-\overline{\textbf{x}}\lVert}\leq C_1 \varepsilon_1 \right.\right\}
\end{align}
Hence, the conditions of the second order periodic averaging theorem with trade-off \cite[Section 2.9]{sanders2007averaging}, \cite{murdock1983some} are satisfied and we are guaranteed the existence of $\varepsilon_2\in(0,\infty)$ such that $\forall\varepsilon\in(0,\varepsilon_2),\,\forall \textbf{x}_0\in \mathcal{K},\,\forall t_0\in \mathbb{R}$ a unique trajectory $\textbf{x}(\tau)$ of the system (\ref{eq:avging_step_0}) exists on the compact time interval $\tau\in\left[0,t_f/\varepsilon^2\right]$ and satisfies: 
\begin{align}
    \lVert \overline{\textbf{x}}(\varepsilon\tau)-\textbf{x}(\tau)\lVert &\leq C_2\varepsilon, & \forall \tau&\in \left[0,t_f/\varepsilon^2\right]
\end{align}
Using the fact that $\varepsilon = 1/\sqrt{\omega}$, it follows from the triangle inequality that $\forall\omega \in(\omega_0,\infty)$:
\begin{align}
    \left\lVert\overline{\overline{\textbf{x}}}(\varepsilon^2\tau)-\textbf{x}(\tau)\right\lVert &\leq C/\sqrt{\omega}, & \forall \tau&\in[0,\omega\,t_f]
\end{align}
where $C=C_1+C_2$, and $\omega_0=1/\varepsilon_2^2$. With some healthy notation abuse, it follows that:
\begin{align}
    \left\lVert\overline{\overline{\textbf{x}}}(t)-\textbf{x}(t)\right\lVert &\leq C/\sqrt{\omega}, & \forall t&\in[t_0,t_0+t_f]
\end{align}
\end{proof}
\begin{rem}
Observe that we have not made any mention of the near-identity transforms in the proof nor in the construction of the averaged system, despite the fact that we employed the higher order averaging theorem. This is due to the fact the $\mathcal{O}(\varepsilon)$ terms in the standard averaging procedure vanish, and so the contribution of the near-identity transform to the accuracy of the solution is of the same order as the remainders in the averaging procedure, i.e. $\mathcal{O}(\varepsilon)$ \cite{murdock1983some}.
\end{rem}
This result establishes the convergence-of-trajectories property, i.e. Hypothesis 2 in \cite{moreau2000practical}, which paves the way for transferring stability properties from the averaged system (\ref{eq:gen_avg_sys}) to the original system (\ref{eq:gen_orig_sys}). In particular, the proof of the following theorem is straightforward:
\begin{thm}\thlabel{thm:T2}
Suppose that a compact subset $\mathcal{S}\subset \mathbb{R}^n$ is globally uniformly asymptotically stable for the system (\ref{eq:gen_avg_sys}). Then the subset $\mathcal{S}$ is semi-globally practically uniformly asymptotically stable for the system (\ref{eq:gen_orig_sys}).  
\end{thm}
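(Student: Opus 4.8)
The plan is to invoke the standard ``averaging implies practical stability'' machinery, since \thref{thm:T1} has already delivered the crucial ingredient. The key observation is that \thref{thm:T1} establishes exactly the convergence-of-trajectories property between \eqref{eq:gen_orig_sys} and \eqref{eq:gen_avg_sys}, which is precisely Hypothesis 2 of Moreau and Aeyels \cite{moreau2000practical}. Their framework states that if the averaged system has a globally uniformly asymptotically stable (GUAS) compact set and the convergence-of-trajectories property holds with respect to the parameter $1/\sqrt{\omega}$, then the same compact set is semi-globally practically uniformly asymptotically stable (SGPUAS) for the original parametrized family. So the entire proof reduces to verifying that the two hypotheses of the relevant theorem in \cite{moreau2000practical} are met and then quoting its conclusion.

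First I would identify the parameter: set $\varepsilon = 1/\sqrt{\omega}$ so that \eqref{eq:gen_orig_sys} becomes a family of time-varying systems indexed by $\varepsilon$, with the averaged system \eqref{eq:gen_avg_sys} as the formal $\varepsilon \to 0$ limit. Second, I would state that the hypothesis on the averaged system is satisfied by assumption, namely that $\mathcal{S}$ is GUAS for \eqref{eq:gen_avg_sys}. Third, and this is the substantive verification, I would confirm the convergence-of-trajectories property: for every compact set of initial conditions $\mathcal{K}$ and every finite horizon $t_f$, \thref{thm:T1} guarantees constants $C, \omega_0$ such that trajectories of the original and averaged systems remain $C/\sqrt{\omega}$-close uniformly on $[t_0, t_0 + t_f]$, uniformly in $t_0 \in \mathbb{R}$. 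This uniform-in-$t_0$ closeness on arbitrary compact horizons from arbitrary compact initial sets is exactly what the Moreau--Aeyels hypothesis demands. Finally, I would apply their main averaging theorem to transfer GUAS of $\mathcal{S}$ for the averaged system to SGPUAS of $\mathcal{S}$ for the family \eqref{eq:gen_orig_sys}.

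The main subtlety, and the step I would scrutinize most carefully, is the matching of quantifiers and the domain of validity. \thref{thm:T1} provides the approximation bound only on a fixed finite interval $[t_0, t_0 + t_f]$, whereas practical asymptotic stability is an infinite-horizon property. The reconciliation is standard: GUAS of $\mathcal{S}$ supplies, via a converse Lyapunov argument, a finite time $t_f$ after which averaged trajectories from $\mathcal{K}$ have entered an arbitrarily small neighborhood of $\mathcal{S}$, and the finite-horizon closeness then suffices to conclude that original trajectories enter a slightly larger neighborhood, shrinking to $\mathcal{S}$ as $\omega \to \infty$. This bridging from finite-horizon approximation plus asymptotic stability of the limit system to infinite-horizon practical stability is precisely the content packaged in \cite{moreau2000practical}, which is why the authors can legitimately describe the proof as ``straightforward'': the real work was done in establishing \thref{thm:T1}, and what remains is a careful but routine citation of the abstract transfer principle with the hypotheses checked one by one.
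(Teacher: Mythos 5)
Your proposal is correct and follows essentially the same route as the paper: the authors explicitly note that \thref{thm:T1} establishes the convergence-of-trajectories property (Hypothesis 2 of \cite{moreau2000practical}) and then declare the proof of \thref{thm:T2} ``straightforward,'' meaning precisely the citation of the Moreau--Aeyels transfer principle that you spell out. Your fleshed-out verification of the quantifier matching and the finite-horizon-to-practical-stability bridge is exactly the content the paper leaves implicit.
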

\section{Singularly Perturbed Recursive Averaging}
The results of the previous section may be combined with singular perturbation techniques to expand the class of systems considered. In particular, consider the class of systems on the form:
\begin{gather}\label{eq:gen_sing_orig_sys_1}
    \dot{\textbf{x}} =\sqrt{\omega}~ \textbf{f}_1(\textbf{x},t,\sqrt{\omega}t, \omega t)+ \textbf{f}_2(\textbf{x},t,\sqrt{\omega}t, \omega t),\\
    \label{eq:gen_sing_orig_sys_2}
    \mu ~\dot{\textbf{z}} = \textbf{g}(\textbf{x},\textbf{z}),
\end{gather}
subject to the initial conditions $\textbf{x}(t_0)=\textbf{x}_0\in\mathbb{R}^n$ and $\textbf{z}(t_0)=\textbf{z}_0\in\mathbb{R}^m$, where $\mu,\omega>0$. Suppose the following assumption is satisfied:
\begin{asmp}\thlabel{asmp:A2}
The vector field $\textbf{g}$ is such that
\begin{itemize}
    \item[B1] $\textbf{g}(\cdot,\cdot)\in C^1(\mathbb{R}^n\times\mathbb{R}^m;\mathbb{R}^m)$
    \item[B2] $\exists!\bm{\varphi}\in C^2(\mathbb{R}^n;\mathbb{R}^m)$ s.t. $\textbf{g}(\textbf{x},\bm{\varphi}(\textbf{x})) = 0, \forall \textbf{x}\in\mathbb{R}^n$
    \item[B3] $\forall\textbf{x}\in\mathbb{R}^n$, the point $\bm{\varphi}(\textbf{x})$ globally uniformly asymptotically stable for the system:
    \begin{align*}
        \dot{\textbf{z}} &= \textbf{g}(\textbf{x},\textbf{z}), &\textbf{z}(t_0)&=\textbf{z}_0 \\
\end{align*}
\end{itemize}
\end{asmp}
Furthermore, consider the reduced order recursively averaged (RORA) system:
\begin{align}\label{eq:gen_sing_avg_sys}
    \dot{\overline{\overline{\textbf{x}}}} &= \overline{\textbf{f}}_1(\overline{\overline{\textbf{x}}},t)+\overline{\textbf{f}}_2(\overline{\overline{\textbf{x}}},t), & \overline{\overline{\textbf{x}}}(t_0) &=\textbf{x}_0
\end{align}
where the time varying vector fields $\overline{\textbf{f}}_1$ and $\overline{\textbf{f}}_2$ are given by:
\begin{align}
    \nonumber\overline{\textbf{f}}_1(\textbf{x},t)= \frac{1}{T_1 T_2}\int_0^{T_1}\int_0^{T_2}\Big[ \int_0^{\tau_2}&\tilde{\textbf{f}}_1(\textbf{x},t,\tau_1, s_2) ds_2,  \\
    &\tilde{\textbf{f}}_1(\textbf{x},t,\tau_1, \tau_2)\Big]d\tau_2 ~d\tau_1 \\
    \overline{\textbf{f}}_2(\textbf{x},t) = \frac{1}{2T_1 T_2}\int_0^{T_1}\int_0^{T_2} \tilde{\textbf{f}}_2(\textbf{x},&t,\tau_1, \tau_2)d\tau_2 ~d\tau_1
\end{align}
and the time varying vector fields $\tilde{\textbf{f}}_1$ and $\tilde{\textbf{f}}_2$ are given by:
\begin{align}
    \tilde{\textbf{f}}_1(\textbf{x},t,\cdot,\cdot) &= \textbf{f}_1(\textbf{x},\bm{\varphi}(\textbf{x}),t,\cdot,\cdot)\\
    \tilde{\textbf{f}}_2(\textbf{x},t,\cdot,\cdot) &= \textbf{f}_2(\textbf{x},\bm{\varphi}(\textbf{x}),t,\cdot,\cdot)
\end{align}
Then, we have the following theorem concerning the relation between the stability properties of the system (\ref{eq:gen_sing_orig_sys_1})-(\ref{eq:gen_sing_orig_sys_2}) and the RORA system (\ref{eq:gen_sing_avg_sys}):
\begin{thm}\thlabel{thm:T3}
Suppose that a compact subset $\mathcal{S}\subset \mathbb{R}^n$ is globally uniformly asymptotically stable for the RORA system (\ref{eq:gen_sing_avg_sys}). Then the subset $\mathcal{S}$ is singularly semi-globally practically uniformly asymptotically stable for the system (\ref{eq:gen_sing_orig_sys_1})-(\ref{eq:gen_sing_orig_sys_2}).
\end{thm}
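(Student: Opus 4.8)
The plan is to prove \thref{thm:T3} by combining the recursive-averaging result \thref{thm:T2} with a singular-perturbation (Tikhonov-type) reduction, carried out strictly in the order \emph{reduce-then-average}. The key preliminary observation is that substituting the quasi-steady state $\textbf{z}=\bm{\varphi}(\textbf{x})$ into \eqref{eq:gen_sing_orig_sys_1} produces exactly the slow (reduced) oscillatory system $\dot{\textbf{x}}=\sqrt{\omega}\,\tilde{\textbf{f}}_1(\textbf{x},t,\sqrt{\omega}t,\omega t)+\tilde{\textbf{f}}_2(\textbf{x},t,\sqrt{\omega}t,\omega t)$, which is precisely of the form \eqref{eq:gen_orig_sys} with $\textbf{f}_i$ replaced by $\tilde{\textbf{f}}_i$, and whose recursive average is the RORA system \eqref{eq:gen_sing_avg_sys}. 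This ordering is essential and is not a matter of convenience: since the second-order averaging term is a Lie bracket, it does \emph{not} commute with the substitution $\textbf{z}=\bm{\varphi}(\textbf{x})$, because the chain-rule contributions $\partial_{\textbf{z}}\textbf{f}_1\,\partial_{\textbf{x}}\bm{\varphi}$ present in $\partial_{\textbf{x}}\tilde{\textbf{f}}_1$ would be lost if one instead averaged the coupled $(\textbf{x},\textbf{z})$ system and reduced afterwards. Hence the manifold reduction must precede the averaging.

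First I would apply \thref{thm:T2} to the reduced oscillatory system. Since $\mathcal{S}$ is globally uniformly asymptotically stable for its recursive average, namely the RORA system \eqref{eq:gen_sing_avg_sys}, \thref{thm:T2} yields that $\mathcal{S}$ is semi-globally practically uniformly asymptotically stable for the reduced oscillatory system: for each prescribed pair of inner/outer radii there is an $\omega^\ast$ beyond which the trajectories obey a $\mathcal{KL}$ bound up to an arbitrarily small residual. Second, I would set up the boundary-layer analysis. Assumptions B1--B2 furnish a unique $C^2$ slow manifold $\textbf{z}=\bm{\varphi}(\textbf{x})$, and B3 renders it globally uniformly asymptotically stable for the boundary-layer system $\tfrac{d\textbf{z}}{d\xi}=\textbf{g}(\textbf{x},\textbf{z})$ with $\textbf{x}$ frozen; this also absorbs the fast initial transient from an arbitrary off-manifold $\textbf{z}_0$. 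With the reduced (slow) dynamics already shown to render $\mathcal{S}$ practically stable and the boundary layer GUAS, the full two-time-scale system \eqref{eq:gen_sing_orig_sys_1}--\eqref{eq:gen_sing_orig_sys_2} falls within the singularly-perturbed practical-stability framework of \cite{durr2015singularly}; verifying its hypotheses (the regularity B1--B3, the reduced-system $\mathcal{KL}$ estimate, and the boundary-layer estimate) delivers the singular semi-global practical uniform asymptotic stability of $\mathcal{S}$, with the characteristic nested quantifier order in which $\omega$ is fixed large first and then $\mu$ is taken sufficiently small relative to $\omega$.

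The main obstacle is exactly this two-parameter interplay, which is sharper than in a textbook singular perturbation. Because the slow vector field here has magnitude $O(\sqrt{\omega})$ rather than $O(1)$, the state $\textbf{z}$ tracks the quasi-steady manifold only when $\mu$ is small compared with the averaging scale set by $\omega$; moreover the reduction error $\textbf{z}-\bm{\varphi}(\textbf{x})$ re-enters the $\textbf{x}$-dynamics multiplied by the large gain $\sqrt{\omega}$, so a naive estimate would let it overwhelm the $O(1/\sqrt{\omega})$ accuracy of \thref{thm:T1}. The delicate part is therefore to show that, under the ordering ``$\omega$ first, then $\mu(\omega)$,'' the \emph{averaged} contribution of this amplified reduction error stays negligible uniformly over the semi-infinite horizon, so that the Tikhonov reduction does not corrupt the averaging estimate. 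Once this coupling is controlled, the $\mathcal{KL}$ bound for $\mathcal{S}$ inherited from \thref{thm:T2} transfers to the full system up to an arbitrarily small residual, which is the assertion of the theorem.
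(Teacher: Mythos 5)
Your proposal is correct and takes essentially the same approach as the paper: the paper's proof is a one-line appeal to the singularly perturbed framework of \cite{durr2015singularly}, with the Lie bracket approximation theorems of \cite{durr2013lie} replaced by \thref{thm:T1} and \thref{thm:T2}, which is exactly the reduce-then-average construction, boundary-layer argument under B1--B3, and nested ``$\omega$ large first, then $\mu$ small'' quantifier structure that you describe. You simply spell out the details that the paper leaves implicit in its citation.
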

\begin{proof}
The proof of the theorem follows along the same lines in \cite{durr2015singularly} except for the replacement of the Lie Bracket approximation theorems introduced in \cite{durr2013lie} with the recursive averaging approximation procedure we introduced in \thref{thm:T1} and \thref{thm:T2}.
\end{proof}
\section{3D Source Seeking for Rigid Bodies}
Source seeking is the problem of locating a target that emits a scalar measurable signal, typically without global positioning information \cite{cochran20093}. Interestingly, the taxis of microorganisms can be framed as a source seeking problem in which the seeking agent is the microorganism, and the signal may be light (phototaxis), chemical concentration (chemotaxis), or even temperature (thermotaxis). It turns out that millions of years of evolution led certain microorganisms to utilize extremum seeking based algorithms to solve the source seeking problem \cite{abdelgalil2021sperm}. This finding attests to the robustness and simplicity of extremum seeking control. Moreover, the finding serves as an invitation to discover new bio-inspired source seeking algorithms by mimicking nature. This section is a step in such direction. 

The 3D kinematics of a rigid body are given by
\begin{align}
    \label{eq:src_sys_1}
    \dot{\textbf{p}} &= \textbf{R}\textbf{v} \\
    \label{eq:src_sys_2}
    \dot{\textbf{R}} &= \textbf{R}\widehat{\bm{\Omega}}
\end{align}
where $\textbf{p}\in\mathbb{R}^3$ is the position of the origin of the body frame with respect to the reference frame, \textbf{v} is the velocity in body coordinates, $\textbf{R}\in \text{SO}(3)$ is the rotation matrix that relates the body frame to the reference frame, and $\bm{\Omega}\in\mathbb{R}^3$ is the angular velocity in body coordinates. The map $\widehat{\bullet}:\mathbb{R}^3\rightarrow\mathbb{R}^{3\times 3}$ takes a vector $\bm{\Omega}=\left[\Omega_1,\Omega_2,\Omega_3\right]^\intercal \in \mathbb{R}^3$ to the corresponding skew symmetric matrix, and it has the property that $\widehat{\textbf{R}\bm{\Omega}}= \textbf{R}\widehat{\bm{\Omega}}\textbf{R}^\intercal$ when $\textbf{R}$ is a rotation matrix. We consider a vehicle model in which the velocity $\textbf{v}$ and angular velocity $\bm{\Omega}$ are given by:
\begin{align}\label{eq:velo_def}
    \textbf{v} &= \sqrt{2\omega}\,\textbf{e}_1, & \bm{\Omega} &= \Omega_\parallel \textbf{e}_1 + \Omega_\perp\textbf{e}_3
\end{align}
where $\Omega_\parallel$ and $\Omega_\perp$ are the control inputs, which represent roll and yaw of the body frame, respectively, and $\omega$ is a positive parameter. 
\begin{rem}
This model is a natural extension of the unicycle model to the 3D setting. It is well known that this system is controllable using depth one Lie brackets \cite{leonard1993averaging}.\\
\end{rem}

Let $c\in C^2(\mathbb{R}^3;\mathbb{R})$ represent the signal strength field and define the control inputs $\Omega_\parallel$ and $\Omega_\perp$ by the dynamic time periodic feedback law:
\begin{align}
    \label{eq:src_law_1}
    \mu\,\dot{z} &= c(\textbf{p})-z\\
    \label{eq:src_law_2}
    \Omega_\perp(c(\textbf{p}),z) &= \omega - \dot{z} \\
    \label{eq:src_law_3}
    \Omega_\parallel(z,\omega t) &= 2\alpha \sqrt{2\omega}\sin\left(\omega t-z+\frac{\pi}{4}\right)
\end{align}
where $\omega>0$,  $\mu$, and $\alpha>0$ are constant parameters.
\begin{rem}
When the motion is confined to the plane (i.e. $\alpha=0$), this control law is the same as the source seeking algorithm for the unicycle model introduced in \cite{scheinker2014extremum}, which also turns out to be the same algorithm employed by sea urchin sperm cells for seeking the egg in 2D \cite{abdelgalil2021sperm}. Here we extend the controller to the 3D setting and establish its practical stability. We emphasize that the 2D source seeking algorithm in \cite{scheinker2014extremum} does not work directly in 3D; the current framework of recursive averaging is needed. We also note that other choices for $\Omega_\parallel$ are possible, particularly ones that do not depend on $z$ at all. That is, we could have chosen $\Omega_\parallel(z,\omega t)=\sqrt{\omega} \sin(\omega t)$, i.e. an open loop controller, that would have still worked. However, the computations would have been slightly more involved.
\end{rem} 

\begin{asmp}\thlabel{asmp:A3}
Suppose that the signal strength field $c\in C^2(\mathbb{R}^3;\mathbb{R})$ is radially unbounded, $\exists!\textbf{p}^*\in \mathbb{R}^3$ such that $\nabla c(\textbf{p})= 0 \iff \textbf{p}=\textbf{p}^*$, and satisfies the inequality:
\begin{align}
c(\textbf{p})-c(\textbf{p}^*)\geq -\kappa \lVert \nabla c(\textbf{p})\lVert^2, \, \forall \textbf{p}\in\mathbb{R}^n\\\nonumber
\end{align}
\end{asmp}
\begin{figure*}[t]
    \centering\hfill\subfloat[Signal strength versus time for \thref{exmp:ex1} (upper) and \thref{exmp:ex2} (lower)]{
    \label{fig:ex_costs}\hfill \includegraphics[width=0.85\columnwidth]{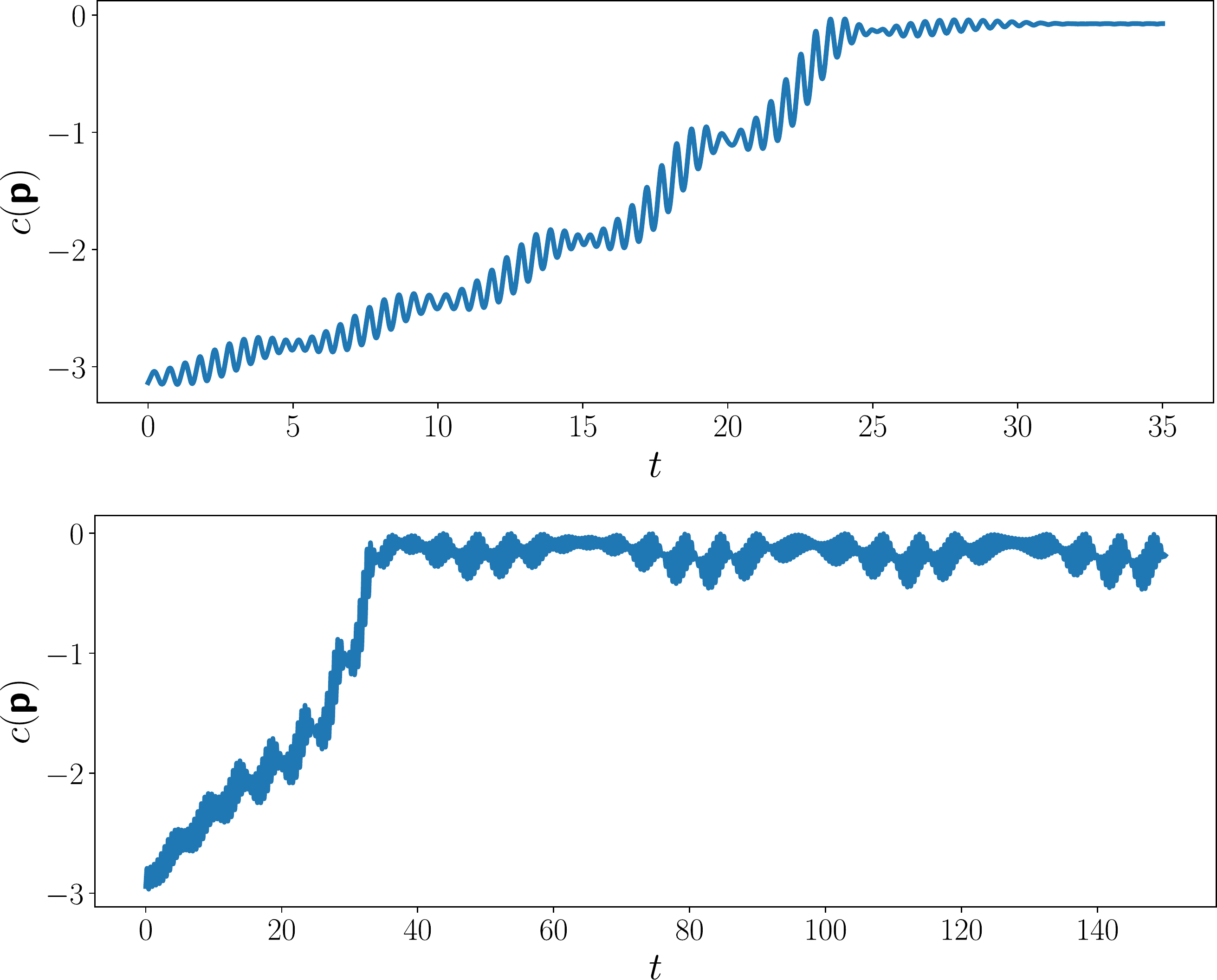}}\hfill \subfloat[3D Trajectory for \thref{exmp:ex1} (left) and \thref{exmp:ex2} (right)]{
    \label{fig:ex_trajs}\hfill \includegraphics[width=1\columnwidth]{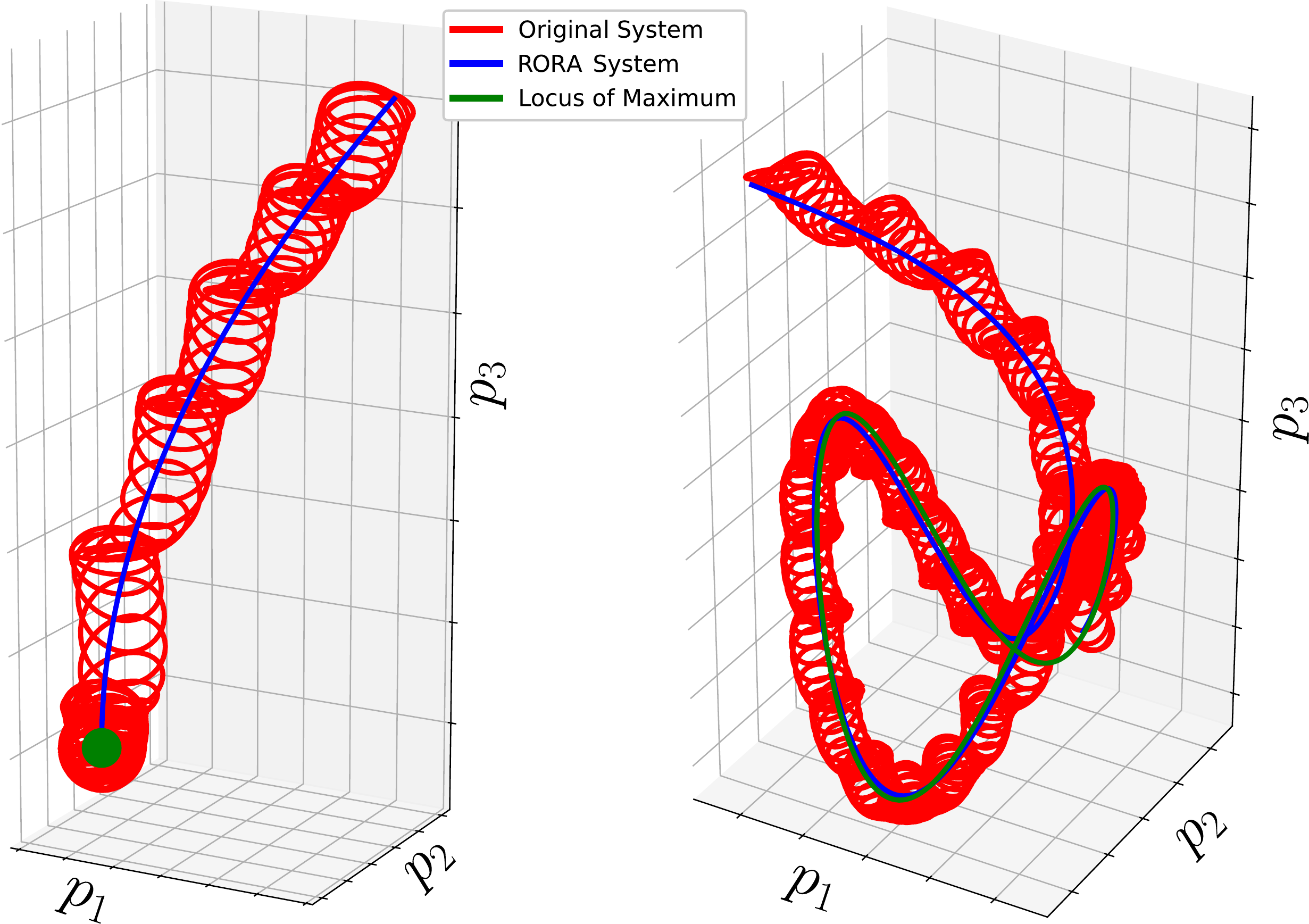}}\hfill
    \caption{Numerical simulation results}
\end{figure*}
Then, we have the following theorem:
\begin{thm}
Let \thref{asmp:A3} be satisfied. Then, the compact subset $\{\textbf{p}^*\}\times\text{SO}(3)$ is singularly semi-globally practically uniformly asymptotically stable for the control system defined by (\ref{eq:src_sys_1})-(\ref{eq:velo_def}) under the dynamic feedback law defined by (\ref{eq:src_law_1})-(\ref{eq:src_law_3}). 
\end{thm}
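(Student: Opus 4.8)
The plan is to recast the closed loop as a singularly perturbed recursive-averaging system of the form (\ref{eq:gen_sing_orig_sys_1})--(\ref{eq:gen_sing_orig_sys_2}), with $z$ as the singularly perturbed state and $(\textbf{p},\textbf{R})\in\mathbb{R}^3\times\text{SO}(3)$ as the slow state, and then apply \thref{thm:T3}. Substituting the feedback (\ref{eq:src_law_1})--(\ref{eq:src_law_3}) into (\ref{eq:src_sys_1})--(\ref{eq:velo_def}), the essential observation is that the yaw phase $\psi:=\omega t-z$ satisfies $\dot{\psi}=\omega-\dot{z}=\Omega_\perp$. Consequently the time- and state-dependent frame change $\textbf{Q}=\textbf{R}\exp(-\psi\,\widehat{\textbf{e}_3})$ factors out the fast yaw rotation and absorbs the term $\dot{z}=\tfrac{1}{\mu}(c(\textbf{p})-z)$ \emph{exactly}, leaving $\dot{\textbf{Q}}=\textbf{Q}\,\Omega_\parallel(\cos\psi\,\widehat{\textbf{e}_1}+\sin\psi\,\widehat{\textbf{e}_2})$ and $\dot{\textbf{p}}=\sqrt{2\omega}\,\textbf{Q}(\cos\psi\,\textbf{e}_1+\sin\psi\,\textbf{e}_2)$. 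These now depend on $z$ only through the regular phase shift $\psi$, with no $1/\mu$ singularity, while $\mu\,\dot{z}=c(\textbf{p})-z$ exhibits the quasi-steady state $\bm{\varphi}(\textbf{p})=c(\textbf{p})$.

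Both transformed fields are $O(\sqrt{\omega})$ and have zero mean in the fast phase $\tau_2=\omega t$, save for the roll-yaw product in $\dot{\textbf{Q}}$, whose $\tau_2$-average is the nonzero body-axis precession $\textbf{Q}\,\alpha\sqrt{\omega}\,\widehat{(\textbf{e}_1+\textbf{e}_2)}$. This precession is exactly the intermediate $\sqrt{\omega}\,t$ time scale, so a second, purely time-dependent frame change $\textbf{Q}=\textbf{P}\exp\!\big(\alpha\sqrt{\omega}\,t\,\widehat{(\textbf{e}_1+\textbf{e}_2)}\big)$ removes it: the residual $O(\sqrt{\omega})$ field is then zero-mean and $2\pi$-periodic in $\tau_2$ (so A6 holds), while the conjugating rotation injects explicit dependence on $\tau_1=\sqrt{\omega}\,t$ that is periodic with period $2\pi/(\sqrt{2}\,\alpha)$. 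The system in the slow state $(\textbf{p},\textbf{P})$ is now exactly of the form (\ref{eq:gen_sing_orig_sys_1})--(\ref{eq:gen_sing_orig_sys_2}) with $\textbf{f}_2\equiv 0$. \thref{asmp:A1} follows from $c\in C^2$ together with the two periodicities just established, and \thref{asmp:A2} holds because the boundary layer $\dot{z}=c(\textbf{p})-z$ is linear and globally exponentially stable with root $\bm{\varphi}=c\in C^2$.

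The main computation is the RORA field (\ref{eq:gen_sing_avg_sys}). Since $\textbf{f}_2\equiv 0$ we get $\overline{\textbf{f}}_2\equiv 0$, so the entire drift is produced by the nested Lie-bracket average $\overline{\textbf{f}}_1$ of $\tilde{\textbf{f}}_1$, i.e.\ with the phase evaluated on the slow manifold as $\phi=\tau_2-c(\textbf{p})$. First I would carry out the inner ($\tau_2$) bracket-integral, which couples the translational oscillation to the residual rotational oscillation, and then the outer ($\tau_1$) average over the precession cycle. The decisive mechanism is that the phase carries the scalar $c(\textbf{p})$, so $\partial_{\textbf{p}}\phi=-\nabla c(\textbf{p})$ and the $\partial_{\textbf{p}}$ in the bracket produces $\nabla c$; tracking the orthogonal factors $\textbf{P}$ and $\exp(\alpha\sqrt{\omega}t\,\widehat{(\textbf{e}_1+\textbf{e}_2)})$ through both integrals, the translational drift collapses to $\dot{\overline{\overline{\textbf{p}}}}=\textbf{R}\,N\,\textbf{R}^\intercal\,\nabla c(\overline{\overline{\textbf{p}}})$ for a constant matrix $N$, with $\textbf{R}$ evolving freely on $\text{SO}(3)$. \textbf{The hard part} will be verifying that $\mathrm{sym}(N)$ is positive definite, the $+\pi/4$ phase in $\Omega_\parallel$ being precisely what fixes this sign; the skew part of $N$ is gyroscopic and will be seen to drop out of the subsequent Lyapunov analysis by the scalar triple-product identity.

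Finally I would close with the Lyapunov function $V(\textbf{p})=c(\textbf{p}^\ast)-c(\textbf{p})$, which by \thref{asmp:A3} is positive definite and radially unbounded with respect to $\{\textbf{p}^\ast\}\times\text{SO}(3)$. Writing $\textbf{w}=\textbf{R}^\intercal\nabla c$, its derivative along the RORA flow is $\dot{V}=-\textbf{w}^\intercal\,\mathrm{sym}(N)\,\textbf{w}\le-\lambda\lVert\nabla c\rVert^2$ with $\lambda=\lambda_{\min}(\mathrm{sym}(N))>0$ (using $\lVert\textbf{w}\rVert=\lVert\nabla c\rVert$). The inequality in \thref{asmp:A3}, which reads $V\le\kappa\lVert\nabla c\rVert^2$, then gives $\dot{V}\le-\tfrac{\lambda}{\kappa}V$, so $\{\textbf{p}^\ast\}\times\text{SO}(3)$ is globally uniformly (indeed exponentially) asymptotically stable for the RORA system. \thref{thm:T3} transfers this to singular semi-global practical uniform asymptotic stability of $\{\textbf{p}^\ast\}\times\text{SO}(3)$ for (\ref{eq:src_sys_1})--(\ref{eq:velo_def}) under (\ref{eq:src_law_1})--(\ref{eq:src_law_3}), which is the claim.
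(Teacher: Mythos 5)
Your construction is essentially the paper's own proof: your two successive frame changes are exactly the paper's single substitution $\textbf{Q}=\textbf{R}\textbf{R}_1^\intercal\textbf{R}_2^\intercal$ with $\textbf{R}_1=\exp\left((\omega t-z)\widehat{\textbf{e}}_3\right)$ and $\textbf{R}_2=\exp\left(\alpha\sqrt{\omega}\,t\,(\widehat{\textbf{e}}_1+\widehat{\textbf{e}}_2)\right)$, and the rest --- casting the slow state into the form (\ref{eq:gen_sing_orig_sys_1})--(\ref{eq:gen_sing_orig_sys_2}) with $\textbf{f}_2\equiv 0$, the three-stage RORA computation (reduction to the slow manifold $z=c(\textbf{p})$, bracket averaging in $\tau$, averaging in $\sigma$), and the appeal to \thref{thm:T3} --- mirrors the paper's Steps I--III.

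The one genuine gap is the step you yourself flag as ``the hard part'': you never establish that $\mathrm{sym}(N)$ is positive definite, and without that the Lyapunov argument, and hence the theorem, is unproven. That sign condition is precisely where the $+\pi/4$ phase and the chosen amplitudes enter, so it cannot be taken on faith; the paper closes it by explicitly evaluating the nested averages and obtaining
\begin{align*}
N=\textbf{A}=\frac{1}{4}\left[\begin{array}{ccc}3&1&0\\1&3&0\\0&0&2\end{array}\right],
\end{align*}
which is symmetric --- so no skew, gyroscopic part survives at all --- with eigenvalues $1,\tfrac{1}{2},\tfrac{1}{2}$. The paper also finds that the averaged attitude dynamics vanish identically, $d\overline{\overline{\textbf{Q}}}/dt=\textbf{0}$, rather than the attitude ``evolving freely''; this imprecision in your write-up is harmless, however, since your Lyapunov function depends only on $\textbf{p}$ and the target set contains the entire $\text{SO}(3)$ factor, so the attitude dynamics never enter the stability estimate. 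Indeed, your closing argument via the inequality in \thref{asmp:A3} (giving $\dot V\le-(\lambda/\kappa)V$) is sound and is in fact more explicit than the paper's, which merely remarks that positive definiteness of $\overline{\overline{\textbf{Q}}}\textbf{A}\overline{\overline{\textbf{Q}}}^\intercal$ makes global uniform asymptotic stability ``easy to see.''
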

\begin{proof}
Define $\tau=\omega t,~ \sigma=\sqrt{\omega} t$, and the intermediate rotation $\textbf{Q} = \textbf{R} \textbf{R}_1^\intercal \textbf{R}_2^\intercal$, where: 
\begin{align*}
    \textbf{R}_1&=\text{exp}\left((\tau -z)\widehat{\textbf{e}}_3\right), & \dot{\textbf{R}}_1&=(\omega -\dot{z})\textbf{R}_1 \widehat{\textbf{e}}_3 \\
    \textbf{R}_2 &= \text{exp}\left(\alpha\sigma\left(\widehat{\textbf{e}}_1+\widehat{\textbf{e}}_2\right)\right), & \dot{\textbf{R}}_2&=\alpha\sqrt{\omega}\textbf{R}_2 \left(\widehat{\textbf{e}}_1+\widehat{\textbf{e}}_2\right)
\end{align*}
Then, compute:
\begin{align*}
    \dot{\textbf{Q}} &= \dot{\textbf{R}}\textbf{R}_1^\intercal \textbf{R}_2^\intercal+ \textbf{R}\dot{\textbf{R}}_1^\intercal \textbf{R}_2^\intercal+\textbf{R}\textbf{R}_1^\intercal\dot{\textbf{R}}_2^\intercal
\end{align*}
Observe that:
\begin{align}
    \dot{\textbf{R}}\textbf{R}_1^\intercal \textbf{R}_2^\intercal&= \textbf{Q}\textbf{R}_2 \textbf{R}_1\left(\Omega_\parallel\widehat{\textbf{e}}_1 + \Omega_\perp\widehat{\textbf{e}}_3\right)\textbf{R}_1^\intercal \textbf{R}_2^\intercal \\
    \textbf{R}\dot{\textbf{R}}_1^\intercal \textbf{R}_2^\intercal &= -\textbf{Q}\textbf{R}_2 \textbf{R}_1\left(\Omega_\perp\widehat{\textbf{e}}_3\right)\textbf{R}_1^\intercal \textbf{R}_2^\intercal\\
    \textbf{R}\textbf{R}_1^\intercal\dot{\textbf{R}}_2^\intercal &= -\alpha\sqrt{\omega}\textbf{Q}\textbf{R}_2 \left( \widehat{\textbf{e}}_1+\widehat{\textbf{e}}_2\right)\textbf{R}_2^\intercal
\end{align}
Consequently, we have that:
\begin{align}
    \dot{\textbf{Q}} &= \textbf{Q}\textbf{R}_2\left(\Omega_\parallel\textbf{R}_1\widehat{\textbf{e}}_1\textbf{R}_1^\intercal-\alpha\sqrt{\omega}\left( \widehat{\textbf{e}}_1+\widehat{\textbf{e}}_2\right)\right)\textbf{R}_2^\intercal
\end{align}
Direct computation shows that:
\begin{align*}
    \Omega_\parallel(z,\tau) \textbf{R}_1\textbf{e}_1 &= \alpha\sqrt{\omega}\left(\textbf{e}_1+\textbf{e}_2 + \bm{\Omega}_1(\tau,z)\right)
\end{align*}
where:
\begin{align}
    \bm{\Omega}_1(\tau,z) = \text{exp}\left(2(\tau-z(t))\widehat{\textbf{e}}_3\right)(\textbf{e}_1-\textbf{e}_2)
\end{align}
Hence, we have that:
\begin{align}
    \dot{\textbf{Q}} &=\alpha\sqrt{\omega} \textbf{Q}\textbf{R}_2\widehat{\bm{\Omega}}_1(\tau,z)\textbf{R}_2^\intercal = \sqrt{\omega} \textbf{Q}\widehat{\bm{\Lambda}}(\tau,z)
\end{align} 
where 
\begin{align*}
\bm{\Lambda}(z,\sigma,\tau)= \alpha~\text{exp}\left(\alpha\sigma\left(\widehat{\textbf{e}}_1+\widehat{\textbf{e}}_2\right)\right)\bm{\Omega}_1(\tau,z)
\end{align*}
Moreover, the position kinematics can be rewritten as:
\begin{align}
    \dot{\textbf{p}} &= \textbf{R}\textbf{v} = \textbf{Q} \textbf{R}_2 \textbf{R}_1 \textbf{v} = \sqrt{\omega}\,\textbf{Q} \,\textbf{f}(z,\sigma,\tau)
\end{align} 
where $\textbf{f}$ is given by:
\begin{align*}
    \textbf{f}(z,\sigma,\tau) &=\sqrt{2}\, \text{exp}\left(\alpha\sigma\left(\widehat{\textbf{e}}_1+\widehat{\textbf{e}}_3\right)\right)\text{exp}\left((\tau -z)\widehat{\textbf{e}}_3\right)\textbf{e}_1
\end{align*}
The time evolution of new variables $\textbf{p}$, $\textbf{Q}$ and $z$ is governed by the system:
\begin{align}\label{eq:orig_dynamics_1}
    \dot{\textbf{p}} &= \sqrt{\omega}\,\textbf{Q}\,\textbf{f}(z,\sigma,\tau)\\\label{eq:orig_dynamics_2}
    \dot{\textbf{Q}} &= \sqrt{\omega}\,\textbf{Q}\, \widehat{\bm{\Lambda}}(z,\sigma,\tau)\\\label{eq:orig_dynamics_3}
    \mu\,\dot{z} &= c(\textbf{p})-z
\end{align}
We embed the manifold $\text{SO}(3)$ into $\mathbb{R}^3\times\mathbb{R}^3\times\mathbb{R}^3$ by partitioning the matrix $\textbf{Q}$ into column vectors $\textbf{Q} = \left[\textbf{q}_1,\, \textbf{q}_2,\, \textbf{q}_3\right]$. Observe that:
\begin{align}
    \dot{\textbf{Q}}&= \textbf{Q} \widehat{\bm{\Lambda}} = \textbf{Q} \widehat{\bm{\Lambda}}\textbf{Q}^\intercal \textbf{Q} = \widehat{\textbf{Q}\bm{\Lambda}} \textbf{Q}
\end{align}
and so it is easy to see that the time evolution of the columns of $\textbf{Q}$ is governed by:
\begin{align}
    \frac{d\textbf{q}_j}{dt} &=\sqrt{\omega} \sum_{i=1}^3\Lambda_i(z,\sigma,\tau)\,\textbf{q}_i\times\textbf{q}_j, & j&\in\{1,2,3\}
\end{align}
Next, define the state vector $\textbf{x}\in\mathbb{R}^{12}$ by $\textbf{x} = \left[\textbf{p}^\intercal,\,\textbf{q}_1^\intercal,\, \textbf{q}_2^\intercal,\, \textbf{q}_3^\intercal\right]^\intercal$, and the vector field $\textbf{X}$ which is given in coordinates by:
\begin{align}
    \textbf{X}(\textbf{x},z,\sigma,\tau)&=\left[\begin{array}{c}
        \sum\limits_i^3 f_i(z,\sigma,\tau)\textbf{q}_i \\
        \sum\limits_{i,k}^3 \Lambda_i(z,\sigma,\tau)\epsilon_{i1k}{\textbf{q}}_k \\
        \sum\limits_{i,k}^3 \Lambda_i(z,\sigma,\tau)\epsilon_{i2k}{\textbf{q}}_k \\
        \sum\limits_{i,k}^3 \Lambda_i(z,\sigma,\tau)\epsilon_{i3k}{\textbf{q}}_k
    \end{array}\right] 
\end{align}
where $\epsilon_{ijk}$ is the Levi-Civita symbol. Restrict the initial conditions to lie on the manifold $\mathcal{M}=\{(\textbf{x},z)\in\mathbb{R}^{13}\,|\, \textbf{q}_i^\intercal\textbf{q}_j=\delta_{ij},~\textbf{q}_i\times\textbf{q}_j = \epsilon_{ijk}\textbf{q}_k \}$, where $\delta_{ij}$ is the Kronecker symbol. With this embedding, the kinematics can be written succinctly as:
\begin{align}\label{eq:step0_sys_1}
    \frac{d\textbf{x}}{dt} &=\sqrt{\omega} \textbf{X}(\textbf{x},z,\sigma,\tau)\\
    \label{eq:step0_sys_2} \mu\,\frac{dz}{d\tau} &= c(\textbf{p})-z
\end{align}
Observe that the system is now on a similar form to (\ref{eq:gen_sing_orig_sys_1}) and (\ref{eq:gen_sing_orig_sys_2}) and satisfies \thref{asmp:A1} and \thref{asmp:A2}. Hence, we may apply \thref{thm:T3} to investigate the stability properties of the system. To proceed, we compute the RORA system.

\subsubsection*{Step I} The first step is singular perturbation which yields the reduced order system:
\begin{align}\label{eq:step1_sys_1}
    \frac{d\widetilde{\textbf{x}}}{dt} &=\sqrt{\omega} \widetilde{\textbf{X}}(\widetilde{\textbf{x}},\sigma,\tau)
\end{align}
which evolves on the slow manifold $\widetilde{\mathcal{M}}=\{(\textbf{x},z)\in \mathcal{M} \,| z=c(\textbf{p})\}$
and the vector field $\widetilde{\textbf{X}}$ is given by:
\begin{align}
    \widetilde{\textbf{X}}(\widetilde{\textbf{x}},\sigma,\tau)&= \textbf{X}(\widetilde{\textbf{x}},c(\widetilde{\textbf{p}}),\sigma,\tau)
\end{align}
\subsubsection*{Step II} The next step in the computation is second order periodic averaging for systems with slow time dependence applied to the reduced order system, which yields the vector field $\overline{\textbf{X}}$:
\begin{align}
    \overline{\textbf{X}}(\overline{\textbf{x}},\sigma) =\frac{1}{4\pi} \int_0^{2\pi}\left[\int\widetilde{\textbf{X}}(\overline{\textbf{x}},\sigma,\tau)d\tau, \widetilde{\textbf{X}}(\overline{\textbf{x}},\sigma,\tau)\right]d\tau
\end{align}
which can be computed in coordinates as:
\begin{align*}
    \overline{\textbf{X}}
    &=\left[\begin{array}{c}
        \sum\limits_{i,j}^3 \alpha_{ij}(\sigma) \overline{\textbf{q}}_i\overline{\textbf{q}}_j^\intercal\nabla c(\overline{\textbf{p}})\\
        \sum\limits_{i,k}\left(\beta_{3i}(\sigma)\epsilon_{i2k}+\beta_{i2}(\sigma)\epsilon_{i3k}\right)\overline{\textbf{q}}_k\\ \sum\limits_{i,k}\left(\beta_{i3}(\sigma)\epsilon_{i1k}+\beta_{1i}(\sigma)\epsilon_{i3k}\right)\overline{\textbf{q}}_k\\ \sum\limits_{i,k}\left(\beta_{2i}(\sigma)\epsilon_{i1k}+\beta_{i1}(\sigma)\epsilon_{i2k}\right)\overline{\textbf{q}}_k
    \end{array}\right]
\end{align*}
where the functions $\alpha_{ij}$ and $\beta_{ij}$ are given by:
\begin{align}
    \alpha_{ij}(\sigma)&= \frac{1}{2\pi}\int_0^{2\pi}\left(\frac{\partial f_i}{\partial z}\int f_j d\tau - \int\frac{\partial f_i}{\partial z}d\tau f_j\right)d\tau \\
    \beta_{ij}(\sigma)&= \frac{1}{2\pi}\int_0^{2\pi}\left(\Lambda_i\int \Lambda_j d\tau-\int\Lambda_i d\tau\Lambda_j\right)d\tau
\end{align}
\subsubsection*{Step III} The final computation is the averaging of $\overline{\textbf{X}}$ over its period to obtain the vector field $\overline{\overline{\textbf{X}}}$. The result of this computation is: 
\begin{align*}
    \overline{\overline{\textbf{X}}} &= \left[\begin{array}{c}
        \sum\limits_{i,j}^3 \text{A}_{ij} \overline{\overline{\textbf{q}}}_i\overline{\overline{\textbf{q}}}_j^\intercal\nabla c(\overline{\overline{\textbf{p}}})\\
        \textbf{0}
    \end{array}\right]
\end{align*}
where $\text{A}_{ij}$ are the entries of the matrix given by:
\begin{align*}
    \textbf{A}&= \frac{1}{4}\left[ \begin{array}{ccc}
        3 & 1 & 0 \\
        1 & 3 & 0 \\
        0 & 0 & 2
    \end{array}\right]
\end{align*}
By changing the coordinates back, we obtain:
\begin{align}\label{eq:red_dynamics_1}
    \frac{d\overline{\overline{\textbf{p}}}}{dt}&= \overline{\overline{\textbf{Q}}}\textbf{A}\overline{\overline{\textbf{Q}}}^\intercal \nabla c(\overline{\overline{\textbf{p}}}) , &
    \frac{d\overline{\overline{\textbf{Q}}}}{dt}&=\textbf{0} , &
    \overline{\overline{z}}&=c(\overline{\overline{\textbf{p}}})
\end{align}
The compact subset $\mathcal{S}= \{\textbf{x}\in \widetilde{\mathcal{M}}~|~\textbf{p}=\textbf{p}^*\}$ 
is globally uniformly asymptotically stable for the dynamics defined by (\ref{eq:red_dynamics_1}). This is easy to see since the matrix $\overline{\overline{\textbf{Q}}}\textbf{A}\overline{\overline{\textbf{Q}}}^\intercal$ is positive definite $\forall \overline{\overline{\textbf{Q}}}\in\text{SO}(3)$, and $\overline{\overline{\textbf{Q}}}$ does not change. Reverting back to $\overline{\overline{\textbf{R}}}$ from $\overline{\overline{\textbf{Q}}}$ will not affect this stability result.
\end{proof}
\section{Numerical Simulations}
Consider the signal strength field given by: 
$$c(\textbf{p},t)= -\log\left(1+(\textbf{p}-\textbf{p}^*(t))^\intercal(\textbf{p}-\textbf{p}^*(t))/2\right)$$
\begin{exmp}\thlabel{exmp:ex1}
For the first example, we let $\textbf{p}^*=0$, which corresponds to a static signal strength field that has a stationary source located at the origin. The initial conditions are taken as $\textbf{p}(0)=[-2,~-2,~6]^\intercal$ and $\textbf{R}=\textbf{I}_{3\times3}$. The parameters of the system are chosen as $\alpha = \frac{1}{8}$, $\omega=4\pi$, and $\mu=16\pi^2$.
\end{exmp}
\begin{exmp}\thlabel{exmp:ex2}
For the second example, we let  $\textbf{p}^*(t) = (2\sin(0.05 t),2\cos(0.05 t),2\cos(0.1 t))$, which corresponds to a time varying signal strength field that has a moving source located at $\textbf{p}^*(t)$. The initial conditions and parameters are the same as in \thref{exmp:ex1}.
\end{exmp}

The numerical simulation results are shown in Fig.\ref{fig:ex_costs} and Fig.\ref{fig:ex_trajs}. We observe how the original system closely approximates the RORA system.
\section{Conclusion}
In this note, we provide a novel use of the higher order averaging theorem in the trajectory approximation and stability analysis of a new class of high-amplitude, high-frequency, oscillatory systems with two distinct periodic time scales. In addition, we proposed a novel 3D source seeking algorithm for rigid bodies inspired by chemotaxis of sperm cells in certain marine animals, and established its practical stability as an application of recursive averaging. 

\section*{Acknowledgement} The authors like to thank Prof. Miroslav Krsti\'c for insightful suggestions. The authors also like to acknowledge the support of the NSF Grant CMMI-1846308. \\


\bibliography{3d_source_seeking}
\bibliographystyle{IEEEtran}

\end{document}